\documentclass[12pt, article]{amsart}
\usepackage{amsmath, amsthm, amscd, amsfonts, amssymb, graphicx, color}
\usepackage[bookmarksnumbered, colorlinks, plainpages]{hyperref}
\usepackage{color}

\textheight 22.5truecm \textwidth 15truecm
\setlength{\oddsidemargin}{0.15in}\setlength{\evensidemargin}{0.15in}

\setlength{\topmargin}{-.5cm}

\newtheorem{theorem}{Theorem}[section]

\newtheorem{corollary}[theorem]{Corollary}
\theoremstyle{definition}

\theoremstyle{remark}
\newtheorem{remark}[theorem]{Remark}
\numberwithin{equation}{section}

\begin{document}

\title{An extension of the P\'{o}lya--Szeg\"{o} operator inequality}

\author[D.T. Hoa, M.S. Moslehian and C. Conde, P. Zhang] {Dinh Trung Hoa$^1$, Mohammad Sal Moslehian$^2$, Cristian Conde$^3$ and Pingping Zhang$^4$}
\address{$^1$Institute of Research and Development, Duy Tân University, Da Nang, Viet Nam}
\email{trunghoa.math@gmail.com}

\address{$^2$Department of Pure Mathematics, Center of Excellence in
Analysis on Algebraic Structures (CEAAS), Ferdowsi University of
Mashhad, P. O. Box 1159, Mashhad 91775, Iran}
\email{moslehian@um.ac.ir, moslehian@member.ams.org}

\address{$^3$Instituto de Ciencias, Universidad Nacional de Gral. Sarmiento, J.	M. Gutierrez 1150, (B1613GSX) Los Polvorines and
	Instituto Argentino de Matem\'atica ``Alberto P. Calder\'on", Saavedra 15 3 piso, (C1083ACA) Buenos Aires, Argentina}
\email{cconde@ungs.edu.ar}

\address{$^4$ School of Science,
 Chongqing University of Posts and
Telecommunications, Chongqing 400065, China.}
\email{zhpp04010248@126.com}

\subjclass[2010]{46L05, 47A30, 47A63.}

\keywords{Operator inequality; P\'{o}lya--Szeg\"{o} inequality; positive linear map; operator mean.}

\begin{abstract}
We extend an operator P\'{o}lya--Szeg\"{o} type inequality involving the operator geometric mean to any arbitrary operator mean under some mild conditions. Utilizing the Mond--Pe\v{c}ari\'c method, we present some other related operator inequalities as well.
\end{abstract}

\maketitle
\section{Introduction}

Let ${\mathbb B}({\mathcal H})$ denote the $C^*$-algebra of all bounded linear operators on a complex Hilbert space $({\mathcal H},\langle \cdot, \cdot \rangle)$ equipped with the operator norm $\|\cdot\|$. Throughout the paper, a capital letter means an operator in ${\mathbb B}({\mathcal H})$. We identify a scalar with the identity operator $I$ multiplied by this scalar. An operator $A$ is called positive if $\langle Ax,x\rangle\geq0$ for all $x\in{\mathcal H }$, and we then write $A\geq 0$. We denote by  $A>0$  if it is a positive invertible operator. For self-adjoint operators $A, B\in{\mathbb B}({\mathcal H})$, we say $B\geq A$ if $B-A \geq 0$. A linear map $\Phi:{\mathbb B}({\mathcal H})\to{\mathbb B}({\mathcal K})$ is called positive if $A \geq 0$ implies $\Phi(A)\geq 0$. It is said to be unital if $\Phi$ preserves the identity operator.

The axiomatic theory for operator means of positive invertible operators have been developed by Kubo and Ando \cite{ando}, in particular an operator mean $\sigma$ has the monotonicity property if $A\leq C$ and $B\leq D $ imply
$A\sigma B\leq C\sigma D$. A continuous real valued
function $h$ defined on an interval $J$ is called operator monotone if $A\geq B$ implies that $h(A)\geq h(B)$ for all self-adjoint operators $A, B $ with spectra in $J$. There exists an affine order isomorphism between the class of operator means $\sigma$ and the class of positive operator monotone functions $h$ defined on $(0,\infty)$ with $h(1)=1$ via
$h(t)I=I\sigma(tI)\,\,(t>0)$. In addition, $A\sigma
B=A^{1\over 2}h(A^{-1\over2}BA^{-1\over2})A^{1\over2}$ for all strictly positive operators $A, B$. The operator monotone function
$h$ is called the representing function of $\sigma$. Using a standard limit argument, this notion can be extended for positive operators $A, B$. The operator mean corresponding to the operator monotone functions $h(t)=t^{\epsilon}\,\,(0<\epsilon<1)$ is called the  weighted operator geometric mean, which is indeed $A\sharp_\epsilon B = A^{1/2}(A^{-1/2}BA^{-1/2})^{\epsilon}A^{1/2}$. The case where $\epsilon=1/2$ gives rise to the usual operator geometric mean $\sharp$.

Ando \cite{AND} proved the property $\Phi(A\sharp B) \leq \Phi(A)\sharp\Phi(B)$ for any positive linear map $\Phi$ (not neccessarily unital). Further, it is well-known that Ando's proof can work not only for $\#$ but also for every operator mean $\sigma$. As a complementary to Ando's inequality, Seo \cite{SEO} obtained some additive and multiplicative types of reverses of Ando's inequality. Reverses of this inequality are known as operator P\'{o}lya--Szeg\"{o} type inequalities in the literature. Moslehian et al. \cite[Theorem 2.1]{BZ02} presented an operator P\'{o}lya--Szeg\"{o} inequality (see also \cite{LEE} for a proof for matrices) as follows:
\begin{theorem}\label{thm1}
Let $\Phi$ be a positive linear map. If $0<m \leq A, B\leq M$ for some positive real numbers $m\leq M$, then
 \begin{eqnarray}\label{amm}
\Phi(A)\sharp\Phi(B) \leq \frac{M+m}{2\sqrt{Mm}} \Phi(A\sharp B)\,.
\end{eqnarray}
\end{theorem}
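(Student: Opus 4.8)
The plan is to reduce \eqref{amm} to a scalar estimate via the functional calculus, using two ingredients: the arithmetic--geometric mean inequality and a sharp \emph{reverse} arithmetic--geometric bound valid under the spectral constraint $m\le A,B\le M$. The observation driving the argument is that the left-hand side $\Phi(A)\sharp\Phi(B)$ can be dominated by the arithmetic mean $\tfrac{1}{2}(\Phi(A)+\Phi(B))$, which interacts transparently with the linear map $\Phi$; this route in fact bypasses Ando's inequality entirely and requires no unitality of $\Phi$.

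First I would establish the auxiliary operator inequality
\begin{eqnarray}\label{aux}
\frac{A+B}{2}\leq \frac{M+m}{2\sqrt{Mm}}\,(A\sharp B)
\end{eqnarray}
for all positive invertible $A,B$ with $0<m\le A,B\le M$. Setting $C:=A^{-1/2}BA^{-1/2}$ and applying the congruence $X\mapsto A^{-1/2}XA^{-1/2}$ (which preserves the order $\le$) to \eqref{aux}, and using $A^{-1/2}(A\sharp B)A^{-1/2}=C^{1/2}$, the claim becomes $\tfrac{1}{2}(I+C)\le \tfrac{M+m}{2\sqrt{Mm}}\,C^{1/2}$. Here every term is a continuous function of the single positive operator $C$, so by the functional calculus the inequality is equivalent to the scalar estimate $\tfrac{1}{2}\bigl(t^{1/2}+t^{-1/2}\bigr)\le \tfrac{M+m}{2\sqrt{Mm}}$ holding for all $t$ in the spectrum of $C$.

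The spectral localization is the heart of the matter. From $m\le B\le M$ I get $mA^{-1}\le C\le MA^{-1}$, and from $m\le A\le M$ I get $M^{-1}\le A^{-1}\le m^{-1}$; combining these yields $\tfrac{m}{M}I\le C\le \tfrac{M}{m}I$. On the interval $[m/M,M/m]$ the convex function $t\mapsto \tfrac{1}{2}(t^{1/2}+t^{-1/2})$ attains its maximum at the endpoints, and by the symmetry $t\leftrightarrow t^{-1}$ both endpoints return exactly the value $\tfrac{1}{2}\bigl(\sqrt{M/m}+\sqrt{m/M}\bigr)=\tfrac{M+m}{2\sqrt{Mm}}$. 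This verifies the scalar estimate on the spectrum of $C$ and hence proves \eqref{aux}.

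Finally I would assemble the pieces. Applying the positive linear map $\Phi$ to \eqref{aux} and invoking its linearity gives $\tfrac{1}{2}\bigl(\Phi(A)+\Phi(B)\bigr)\le \tfrac{M+m}{2\sqrt{Mm}}\,\Phi(A\sharp B)$, while the arithmetic--geometric mean inequality for the positive operators $\Phi(A),\Phi(B)$ gives $\Phi(A)\sharp\Phi(B)\le \tfrac{1}{2}\bigl(\Phi(A)+\Phi(B)\bigr)$. Chaining these two bounds yields \eqref{amm}. I expect the only delicate point to be the sharpness of the constant in the reverse step, namely confirming that the two endpoint values of $\tfrac{1}{2}(t^{1/2}+t^{-1/2})$ on $[m/M,M/m]$ coincide and equal precisely $\tfrac{M+m}{2\sqrt{Mm}}$; once the spectrum of $C$ is correctly pinned down, the remaining manipulations are routine.
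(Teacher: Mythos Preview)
The paper does not actually prove this statement; it is quoted in the introduction as Theorem~2.1 of \cite{BZ02} (with \cite{LEE} mentioned for the matrix case), and the paper's own contribution begins in Section~2 with generalizations to arbitrary operator means. So there is no proof in the paper to compare your proposal against.

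On its own merits your argument is correct and follows the standard route: dominate $\Phi(A)\sharp\Phi(B)$ by the arithmetic mean $\tfrac{1}{2}(\Phi(A)+\Phi(B))$ via the AM--GM inequality, establish the reverse AM--GM bound $\tfrac{1}{2}(A+B)\le \tfrac{M+m}{2\sqrt{Mm}}(A\sharp B)$ under the spectral constraint by reducing to a scalar estimate for $C=A^{-1/2}BA^{-1/2}$, and then push the latter through $\Phi$ using only positivity and linearity. One small imprecision: the function $t\mapsto \tfrac{1}{2}(t^{1/2}+t^{-1/2})$ is not convex on all of $[m/M,M/m]$ in general (its second derivative changes sign at $t=3$), so the ``convex, hence maximum at endpoints'' justification is not quite right. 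What you actually need, and what is true, is that this function is decreasing on $(0,1]$ and increasing on $[1,\infty)$ with a unique minimum at $t=1$, so on any interval it attains its maximum at an endpoint; the rest of the computation then goes through exactly as you wrote.
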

It is well known that $t^2$ is not operator monotone. However, Fujii et al. \cite[Theorem 6]{FUJ} applied the Kantrovich inequality to show that $t^2$ is order preserving in a certain sense. Lin \cite{LIN1} reduced the study of squared operator inequalities to that of some norm inequalities. Hoa, Toan and Binh \cite{HoaToan} studied some reverse inequalities for arbitrary pair of operator means. Recently, Fu and Hoa \cite{HoaFu} extended this result for any power greater than $1$, see also \cite{MF}.

This note intends to extend the operator P\'{o}lya--Szeg\"{o} inequality (\ref{amm}) for any arbitrary operator mean. Our results provide some general reverses of the Ando inequality under some mild conditions.


\section{Results}

In \cite{Ando-Hiai}, Hiai and Ando showed that if the following inequality
\begin{equation}\label{AAHH}
f(A\nabla B) \ge f(A\sigma B)
\end{equation}
holds for any pair of positive definite matrices $A, B$, and for the arithmetic mean $\nabla$ and for some symmetric operator mean $\sigma$ ($\neq \nabla$), then $f$ is operator monotone. Without the condition of operator monotonicity and under some mild conditions, we have the following result of the type of \eqref{AAHH}.
\begin{theorem}\label{thm1}
Let $\Phi, \Psi$ be unital positive linear maps, $\sigma, \tau$ be operator means, $0<m \leq A, B\leq M$ for some positive real numbers $m\leq M$ and $f, g: [0,\infty) \to [0,\infty)$ monotone increasing functions. Then
\begin{eqnarray*}
f\left(\Phi(A)\sigma \Phi(B)\right) \leq f(M)g(m^{-1}) g(\Psi(A) \tau \Psi(B)),
\end{eqnarray*}
\begin{eqnarray*}
f\left(\Phi(A)\sigma \Phi(B)\right) \leq f(M)g(m^{-1}) g(\Psi(A \tau B)),
\end{eqnarray*}
\begin{eqnarray*}
f\left(\Phi(A\sigma B)\right) \leq f(M)g(m^{-1}) g(\Psi(A) \tau \Psi(B)),
\end{eqnarray*}
and
\begin{eqnarray*}
f\left(\Phi(A\sigma B)\right) \leq f(M)g(m^{-1}) g(\Psi(A \tau B)).
\end{eqnarray*}
\end{theorem}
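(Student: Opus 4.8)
The plan is to establish one ``master'' inequality and then obtain all four displayed statements as instances of it. Write $S$ for whichever of $\Phi(A)\sigma\Phi(B)$ or $\Phi(A\sigma B)$ occurs on the left, and $T$ for whichever of $\Psi(A)\tau\Psi(B)$ or $\Psi(A\tau B)$ occurs on the right; then each of the four assertions reads $f(S)\le f(M)\,g(m^{-1})\,g(T)$. It therefore suffices to prove two things: that every admissible $S$ and $T$ is a positive operator with spectrum in $[m,M]$, and that $f(S)\le f(M)\,g(m^{-1})\,g(T)$ holds whenever $mI\le S\le MI$ and $mI\le T\le MI$.

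For the spectral localization I would lean on two stability facts. First, an operator mean never enlarges a scalar window: since every Kubo--Ando mean satisfies $cI\,\sigma\,cI=cI$ for each scalar $c>0$ (its representing function $h$ has $h(1)=1$) and is monotone in both entries, $mI\le A,B\le MI$ forces $mI=mI\,\sigma\,mI\le A\sigma B\le MI\,\sigma\,MI=MI$, and similarly for $\tau$. Second, a unital positive linear map preserves such a window: $A-mI\ge0$ gives $\Phi(A)-mI=\Phi(A-mI)\ge0$, and dually $\Phi(A)\le MI$. Applying these two facts in the appropriate order to each expression shows that $\Phi(A)\sigma\Phi(B)$, $\Phi(A\sigma B)$, $\Psi(A)\tau\Psi(B)$ and $\Psi(A\tau B)$ all have spectra inside $[m,M]$.

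The conceptual core is that ordinary monotonicity of $f$ and $g$ already suffices here, \emph{although these functions need not be operator monotone}, precisely because the comparison operators are scalar multiples of $I$. Indeed, if $mI\le S\le MI$ then $\mathrm{sp}(S)\subseteq[m,M]$, so the spectral mapping theorem gives $\mathrm{sp}(f(S))=f(\mathrm{sp}(S))\subseteq[f(m),f(M)]$ and hence $f(S)\le f(M)I$; symmetrically $g(T)\ge g(m)I$. These combine into
\[
f(S)\le f(M)I\le f(M)\,g(m^{-1})\,g(m)\,I\le f(M)\,g(m^{-1})\,g(T),
\]
where the last inequality scales $g(m)I\le g(T)$ by the nonnegative scalar $f(M)g(m^{-1})$. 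The step I expect to require the most care is the middle one, which rests on the numerical inequality $g(m^{-1})\,g(m)\ge1$; this is the mild condition at play and is automatic, with equality, for the power-type functions $g(t)=t^{p}$ that motivate the theorem, since there $g(m^{-1})=g(m)^{-1}$. Aside from this scalar bookkeeping, the only thing to watch is that the spectral-mapping step applies the functional calculus to the monotone $f,g$ on the compact interval $[m,M]$, so that no operator-monotonicity hypothesis is ever invoked.
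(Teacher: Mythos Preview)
Your argument is correct and follows essentially the same route as the paper: localize the spectra of the four candidate operators in $[m,M]$ via monotonicity of Kubo--Ando means together with unitality of $\Phi,\Psi$, then use ordinary (not operator) monotonicity of $f,g$ through the functional calculus. The paper packages the final step as a norm bound $\|g(D)^{-1/2}f(C)^{1/2}\|\le(\|f(C)\|\,\|g(D)^{-1}\|)^{1/2}\le(f(M)g(m^{-1}))^{1/2}$, which unwinds to exactly your chain $f(S)\le f(M)I\le f(M)g(m^{-1})g(T)$; your observation that this hinges on the scalar inequality $g(m)g(m^{-1})\ge 1$ is on point, since the paper's bound $\|g(D)^{-1}\|\le g(m^{-1})$ tacitly uses the same inequality (one only gets $\|g(D)^{-1}\|\le g(m)^{-1}$ from $m\le D$).
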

\begin{proof}
Put
\[
C := \Phi(A\sigma B) \,\,\textrm{or}\,\, \Phi(A)\sigma \Phi(B) ,\,\, D := \Psi(A)\tau\Psi(B)\, \,\textrm{or}\,\, \Psi(A\tau B).
\]
Since $m\leq A, B\leq M$, we have
$$m=m(1\sigma 1)\leq m\sigma m\leq A\sigma B\leq M\sigma M\leq M(1\sigma 1)=M.$$
Therefore, $ m\leq \Phi(A\sigma B)\leq M$. Similarly, we also have $$m\leq\Phi(A)\sigma \Phi(B),  \Psi(A)\tau\Psi(B), \Psi(A\tau B)\leq M,$$
i.e., $$m\leq C, D\leq M.$$
It follows that
\begin{equation*}
\begin{split}
\|g^{\frac{-1}{2}}(D)f^{\frac{1}{2}}(C)\| & \le (||f(C)|| ||g(D)^{-1}||)^{\frac{1}{2}} \\
& \le  (f(M)g(m^{-1}))^{\frac{1}{2}},
\end{split}
\end{equation*}
which leads to the desired results.
\end{proof}
\begin{remark}
The  positive linear maps $\Phi$ and $\Psi$ in Theorem \ref{thm1} must be unital. If not, the conclusion doesn't hold in general. For example, Let $f(x)=g(x)=x$, $\Phi(A)=kA$, $\Psi(A)=lA$, where $k,l>0$. Obviously, the conclusion doesn't hold in Theorem \ref{thm1} when we choose the appropriate positive numbers  $k$ and $l$.

Notice that if $g$ is an operator monotone function on $[0, \infty),$ then from the general Ando inequality $\Psi(A) \tau \Psi(B) \ge \Psi(A \tau B)$ we have
$$
g(\Psi(A) \tau \Psi(B)) \ge g(\Psi(A \tau B)).
$$
Without the condition of operator monotonicity on $g$ the last inequality can be false.
\end{remark}

 When $f: [0,\infty) \to [0,\infty)$ is a monotone increasing function, we know that $f^{p}$ is also a monotone increasing function for $p>0$. Then we have the following corollary.
\begin{corollary}\label{coro1}
Let $\Phi, \Psi$ be positive linear maps, $\sigma, \tau$ be operator means, $0<m \leq A, B\leq M$ for some positive real numbers $m\leq M$ and $f, g: [0,\infty) \to [0,\infty)$ monotone increasing functions. Then
\begin{eqnarray*}
f^{p}\left(\Phi(A)\sigma \Phi(B)\right) \leq f^{p}(M)g^{p}(m^{-1}) g^{p}(\Psi(A) \tau \Psi(B)),
\end{eqnarray*}
\begin{eqnarray*}
f^{p}\left(\Phi(A)\sigma \Phi(B)\right) \leq f^{p}(M)g^{p}(m^{-1}) g^{p}(\Psi(A \tau B)),
\end{eqnarray*}
\begin{eqnarray*}
f^{p}\left(\Phi(A\sigma B)\right) \leq f^{p}(M)g^{p}(m^{-1}) g^{p}(\Psi(A) \tau \Psi(B)),
\end{eqnarray*}
and
\begin{eqnarray*}
f^{p}\left(\Phi(A\sigma B)\right) \leq f^{p}(M)g^{p}(m^{-1}) g^{p}(\Psi(A \tau B)),
\end{eqnarray*}
where, $p>0$.
\end{corollary}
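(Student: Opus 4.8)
The plan is to obtain the corollary as an immediate consequence of Theorem \ref{thm1}, with no additional computation. The key point is that the class of monotone increasing functions from $[0,\infty)$ into $[0,\infty)$ is closed under raising to a positive power.

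First I would observe that, for any fixed $p>0$, the map $t\mapsto t^p$ is monotone increasing on $[0,\infty)$ and sends $[0,\infty)$ into itself. Hence, whenever $f\colon[0,\infty)\to[0,\infty)$ is monotone increasing, the composite $f^p$ is again a monotone increasing function from $[0,\infty)$ into $[0,\infty)$; the identical reasoning applies to $g^p$. This is exactly the remark made just before the statement of the corollary.

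Next I would simply apply Theorem \ref{thm1} with $f^p$ and $g^p$ in place of $f$ and $g$. Since $f^p$ and $g^p$ meet every hypothesis imposed there on $f$ and $g$ — namely nonnegativity and monotonicity — each of the four inequalities of the theorem delivers, verbatim, the corresponding inequality of the corollary, with $f(M)$, $g(m^{-1})$ and $g(\,\cdot\,)$ replaced by $f^p(M)$, $g^p(m^{-1})$ and $g^p(\,\cdot\,)$.

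I do not anticipate a genuine obstacle, since the statement is a formal specialization of the theorem; the whole content lies in the substitution $f\mapsto f^p$, $g\mapsto g^p$. The only detail worth isolating is the closedness of the monotone increasing functions under $t\mapsto t^p$ noted above, together with the implicit understanding that, as in Theorem \ref{thm1} and the accompanying remark, the maps $\Phi$ and $\Psi$ are taken to be unital.
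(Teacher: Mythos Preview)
Your proposal is correct and matches the paper's own approach exactly: the corollary is derived by observing that $f^p$ and $g^p$ remain monotone increasing on $[0,\infty)$ for $p>0$, and then applying Theorem~\ref{thm1} with $f^p,g^p$ in place of $f,g$. Your remark that $\Phi,\Psi$ must implicitly be unital (as in the theorem and its remark) is also on point.
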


\begin{remark}
It is well-known that the inequality
$$
A\nabla B \ge A \sigma B
$$
could not be squared, and the function $f(x)= x^2$ is not operator monotone on $[0, \infty)$. From Theorem \ref{thm1}, for any monotone increasing function $f$ on $(0, \infty)$ we have
\begin{eqnarray*}
f(M)f(m^{-1}) f(A \sigma B) \ge  f(A \nabla B).
\end{eqnarray*}
Notice that the value $f(M)f(m^{-1})$ is greater than the Kantorovich constant of $M > m$, but
$$
K(M, m) (A\sharp B)^2 \ge (A\nabla B)^2.
$$ is not true in general.
A counterexample can be found in \cite{HoaToan}. But for reader's convenience, we give here an example.
Indeed, let's take $m= 1, M= 2$ and the following matrices
\begin{equation*}\label{giatriXY}
X = \left(\begin{array}{cc}
0.0688 & -0.1082\\
-0.1082 & 0.1998 \\
 \end{array}
\right),\
Y = \left(\begin{array}{cc}
0.7489 & 0.1237\\
0.1237 & 0.4212 \\
\end{array}
\right).
\end{equation*}
It is obvious that  $m \le  X, Y \le M.$ With a help of Matlab we get
$$
\det (K(h) (X\sharp Y)^2 - (X \nabla Y)^2) = -0.0014,
$$
Hence, the inequality
\begin{equation}\label{Q}
K(h) (X\sharp Y)^2 \ge (X \nabla Y)^2
\end{equation}
does not hold.
\end{remark}

It is clear that if we replace the means $\sigma$ and $\tau$ in Theorem \ref{thm1} by the arithmetic mean and the geometric mean, respectively, then we can get the following inequality for $n$ positive definite matrices $A_i$:
\begin{equation}
f\left(\Phi\left(\frac{1}{n}\sum_{i=1}^n A_i \right)\right) \leq f(M)g(m^{-1}) g(\Psi(G(A_1, A_2,\cdots, A_n)),
\end{equation}
where $G(A_1, A_2,\cdots, A_n)$ is the ALM geometric mean of matrices $\{A_n\}$. Recall that for the geometric means for several variables, there are two different approaches: one of them is the iteration approach due to Ando, Li and Mathias \cite{ALM}, which is called the ALM geometric mean, and the other is the Riemannian geometry approach due to Moakher \cite{MOA}, and Bhatia and Holbrook \cite{BH}.  It can easily be verified by the iteration argument from the two variable case that for a positive map $\Psi$
$$
\Psi(G(A_1, A_2,\cdots, A_n)) \le G(\Psi(A_1), \Psi(A_2),\cdots, \Psi(A_n)),
$$
whence we get the next result.
\begin{corollary}\label{MultiAGI}
Let $\Phi, \Psi$ be positive linear maps, $\sigma, \tau$ arbitrary operator means, $0<m \leq A_1, A_2, \cdots, A_n \leq M$ for some positive real numbers $m\leq M$. Let $f, g: [0,\infty) \to [0,\infty)$ be monotone increasing functions and let $g$ be operator monotone on $[0,\infty)$. Then
 \begin{eqnarray*}
f\left(\Phi\left(\frac{1}{n}\sum_{i=1}^n A_n\right)\right) \leq f(M)g(m^{-1}) g(G(\Psi(A_1), \Psi(A_2),\cdots, \Psi(A_n)))\,.
\end{eqnarray*}
\end{corollary}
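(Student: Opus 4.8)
The plan is to deduce the stated bound by chaining together three ingredients that have already been made available in the excerpt: the multivariate form of the first inequality of Theorem~\ref{thm1}, the Ando inequality for the ALM geometric mean, and the operator monotonicity of $g$. The chaining itself is short; the content is in knowing that each link is legitimate.

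First I would record the multivariate analogue of Theorem~\ref{thm1} obtained by taking $\sigma$ to be the $n$-variable arithmetic mean $\tfrac1n\sum_{i=1}^n A_i$ and $\tau$ the ALM geometric mean $G(A_1,\dots,A_n)$, which is precisely the displayed inequality preceding the corollary. The only point to check is that both of these means are again sandwiched between $m$ and $M$: for the arithmetic mean this is the trivial convex-combination estimate $mI\le\tfrac1n\sum_i A_i\le MI$, and for the ALM mean one uses that the geometric mean of operators lying in $[m,M]$ again lies in $[m,M]$, since the scalar bounds $m$ and $M$ factor through each two-variable geometric mean and hence persist through the iteration. Writing $C:=\Phi\!\left(\tfrac1n\sum_i A_i\right)$ and $D:=\Psi\!\left(G(A_1,\dots,A_n)\right)$ and invoking unitality and positivity of $\Phi,\Psi$ exactly as in Theorem~\ref{thm1}, one gets $mI\le C,D\le MI$, so the same norm estimate $\|g^{-1/2}(D)f^{1/2}(C)\|\le (f(M)g(m^{-1}))^{1/2}$ yields
\begin{equation*}
f\!\left(\Phi\!\left(\tfrac1n\sum_{i=1}^n A_i\right)\right)\le f(M)\,g(m^{-1})\,g\!\left(\Psi\!\left(G(A_1,\dots,A_n)\right)\right).
\end{equation*}

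Next I would invoke the multivariate Ando inequality $\Psi(G(A_1,\dots,A_n))\le G(\Psi(A_1),\dots,\Psi(A_n))$, valid for any positive linear map $\Psi$, which follows by iterating the two-variable inequality $\Psi(A\sharp B)\le\Psi(A)\sharp\Psi(B)$ through the defining limit of the ALM mean. Since $g$ is operator monotone on $[0,\infty)$, applying $g$ preserves this operator inequality (exactly the mechanism noted in the remark after Theorem~\ref{thm1}), giving
\begin{equation*}
g\!\left(\Psi\!\left(G(A_1,\dots,A_n)\right)\right)\le g\!\left(G(\Psi(A_1),\dots,\Psi(A_n))\right).
\end{equation*}
Substituting this into the previous display, while keeping the nonnegative scalar factor $f(M)g(m^{-1})$ fixed, produces exactly the asserted bound.

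The routine parts are the two sandwiching estimates and the final substitution; the one place that deserves genuine care — and which I expect to be the main obstacle — is the rigorous justification of the multivariate Ando inequality together with the interval-preservation of the ALM mean. Both rest on the iterative/limit construction of $G$, so a careful argument must verify that positivity of $\Psi$ is compatible with passing to the limit in the ALM recursion and that the bounds $m\le\,\cdot\,\le M$ are stable under that recursion. Once these structural facts about $G$ are granted, as they are in the cited literature, the chaining above is immediate.
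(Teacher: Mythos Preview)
Your proposal is correct and follows exactly the paper's route: first obtain the displayed inequality preceding the corollary by running the proof of Theorem~\ref{thm1} with the $n$-variable arithmetic mean and the ALM geometric mean in place of $\sigma$ and $\tau$, then apply the multivariate Ando inequality $\Psi(G(A_1,\dots,A_n))\le G(\Psi(A_1),\dots,\Psi(A_n))$ together with the operator monotonicity of $g$. The only additional care you take---checking that both means stay in $[m,M]$ and that the Ando inequality survives the ALM iteration---is precisely what the paper leaves implicit.
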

\begin{remark}
In the special case when $f(x)= g(x)=x$, $\Psi (A)= \Phi(A) = A$ we get a reverse arithmetic-geometric mean (AGM) inequality for multi-geometric and multi-arithmetic means as follows:
\begin{eqnarray*}
\frac{1}{n}\sum_{i=1}^n A_n \leq \frac{M}{m}G(A_1, A_2,\cdots, A_n),
\end{eqnarray*}
Recall the AGM inequality obtained by Yamazaki \cite{Yam}
$$
\frac{A_1+A_2+ \cdots + A_n}{n} \le \left(\frac{(m+M)^2}{4Mm}\right)^{\frac{n-1}{2}}G(A_1, A_2, \cdots, A_n).
$$
Let us compare the Kantorovich constant $(\frac{(m+M)^2}{4Mm})^{\frac{n-1}{2}}$ and $\frac{M}{m}$ in case $M=2, m=1$ and $n=5$. Then we have
$$(\frac{(m+M)^2}{4Mm})^{2} = (\frac{9}{8})^{2} \le  2 = \frac{M}{m}.$$

In general, it is easy to see that for enough big $n$, we have
$$
\left(\frac{9}{8}\right)^{(n-1)/2} \ge 2.
$$
That means that none of coefficients in Yamazaki's inequality and in Corollary \ref{MultiAGI} is uniformly better.
\end{remark}

\begin{remark}
The constants $m$ and $M$ can be chosen as $$m=\min\left(\{\langle Ax,x\rangle: \|x\|=1\}\cup \{\langle Bx,x\rangle: \|x\|=1\}\right)$$ and $$M=\max\left(\{\langle Ax,x\rangle: \|x\|=1\}\cup \{\langle Bx,x\rangle: \|x\|=1\}\right).$$
\end{remark}


Theorem \ref{thm1} for the functions $f(t)=t^p, g(t)=t^q$ with $p, q\geq 0$ gives rise to the following result.

\begin{corollary}
Let $\Phi, \Psi$ be positive linear maps, $p, q\geq 0$ and $0<m \leq A, B\leq M$ for some positive real numbers $m\leq M$. Then
\begin{eqnarray}\label{zzzz2}
\left(\Phi(A)\sharp\Phi(B)\right)^p \leq M^{p}m^{-q} (\Psi(A\sharp B))^q.
\end{eqnarray}
\end{corollary}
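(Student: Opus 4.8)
The plan is to obtain \eqref{zzzz2} as a direct specialization of Theorem \ref{thm1}, with no new analytic input required. I would take both operator means to be the geometric mean, setting $\sigma=\tau=\sharp$, which is legitimate since $\sharp$ is an operator mean whose representing function is $h(t)=t^{1/2}$. I would then let the two monotone increasing functions be the power functions $f(t)=t^{p}$ and $g(t)=t^{q}$.

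First I would verify that these choices meet the hypotheses of Theorem \ref{thm1}. For any $p,q\ge 0$ the maps $t\mapsto t^{p}$ and $t\mapsto t^{q}$ send $[0,\infty)$ into $[0,\infty)$ and are monotone increasing there (the zero-exponent case giving the constant function $1$, which is still nondecreasing), so $f$ and $g$ are admissible. The bounds $0<m\le A,B\le M$ and the positivity and unitality of $\Phi,\Psi$ are inherited verbatim, so every assumption of the theorem is in force.

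Next I would invoke the second of the four inequalities of Theorem \ref{thm1}, namely
\[
f\!\left(\Phi(A)\sigma\Phi(B)\right)\le f(M)\,g(m^{-1})\,g\!\left(\Psi(A\tau B)\right),
\]
and substitute the specific data. With $\sigma=\tau=\sharp$, $f(t)=t^{p}$ and $g(t)=t^{q}$, the left-hand side becomes $\left(\Phi(A)\sharp\Phi(B)\right)^{p}$, while the right-hand side becomes $M^{p}\,(m^{-1})^{q}\,\left(\Psi(A\sharp B)\right)^{q}=M^{p}m^{-q}\left(\Psi(A\sharp B)\right)^{q}$, which is exactly \eqref{zzzz2}.

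Since the argument is a pure substitution, there is essentially no hard step; the only point deserving care is that Theorem \ref{thm1} is stated for unital positive linear maps, so the corollary must be read under the same unitality convention (the counterexample in the preceding remark shows this hypothesis cannot be dropped). One could alternatively reach the same conclusion from any of the other three inequalities by pairing $\sharp$ appropriately, but the second one yields precisely the asserted form.
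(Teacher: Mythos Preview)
Your proposal is correct and is exactly the paper's approach: the corollary is obtained by specializing Theorem~\ref{thm1} to $\sigma=\tau=\sharp$ and $f(t)=t^{p}$, $g(t)=t^{q}$. Your observation that the unitality hypothesis on $\Phi,\Psi$ must be carried over (despite the corollary's looser wording) is well taken.
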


\begin{remark}
Although, for $p=q=1$ one observes that the coefficient $\frac{M}{m}$ in \eqref{zzzz2} is greater than $\frac{M+m}{2\sqrt{Mm}}$ in \eqref{amm} in general, we obtained the relation between $( \Phi(A)\sharp\Phi(B))^p$ and $(\Psi(A\sharp B))^q$.
\end{remark}


In the sequel, we provide another general inequality by employing the Mond--Pe\v{c}ari\'c method.

\begin{theorem}\label{thm3}
Let $\Phi$ be a positive linear map, $\sigma$ be an operator mean with the representing function $h$, $0<m \leq A, B\leq M$ for some positive real numbers $m\leq M$ and $f, g: [m,M] \to [0,\infty)$ be continuous functions such that $g$ is nonzero, monotone increasing and concave. Then
 \begin{eqnarray*}
f\left(\Phi(A)\sigma \Phi(B)\right) \leq \gamma g(\Phi(A \sigma B))\,,
\end{eqnarray*}
where \quad $\gamma=\max\left\{\frac{f(t)}{\mu_g\alpha^{-1}t+\nu_g}: m\leq t\leq M\right\}$,\quad $\mu_g:=\frac{g(M)-g(m)}{M-m}$, $\nu_g:=\frac{Mg(m)-mg(M)}{M-m}$,\quad $\alpha=\max\left\{\frac{h(t)}{\mu_h t+\nu_h}: \frac{m}{M}\leq t\leq \frac{M}{m}\right\}$,\, $\mu_h:=\frac{h(\frac{M}{m})-h(\frac{m}{M})}{(\frac{M}{m})-(\frac{m}{M})}$ and $\nu_h:=\frac{(\frac{M}{m})h(\frac{m}{M})-(\frac{m}{M})h(\frac{M}{m})}{(\frac{M}{m})-(\frac{m}{M})}$.
\end{theorem}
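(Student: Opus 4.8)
The plan is to decouple the statement into two independent pieces joined by a single scalar constant: a reverse Ando inequality of the form $\Phi(A)\sigma\Phi(B)\le \alpha\,\Phi(A\sigma B)$ governed entirely by the representing function $h$, followed by a Mond--Pe\v{c}ari\'c chaining argument governed by the concave function $g$ and the constant $\gamma$. First I would set $C:=\Phi(A)\sigma\Phi(B)$ and $D:=\Phi(A\sigma B)$ and record, exactly as in the proof of Theorem \ref{thm1}, that $m\le C,D\le M$, so that the scalar bounds coming from $f$ and $g$ on $[m,M]$ may be lifted to $C$ and $D$ by functional calculus. I would also note that since $m\le A,B\le M$ (and, for $\Phi$ unital, $m\le\Phi(A),\Phi(B)\le M$), both $A^{-1/2}BA^{-1/2}$ and $\Phi(A)^{-1/2}\Phi(B)\Phi(A)^{-1/2}$ have spectra in $[\tfrac{m}{M},\tfrac{M}{m}]$, which is the interval on which $\alpha$ is defined.

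The heart of the argument is the reverse Ando step. Let $\ell_h(s):=\mu_h s+\nu_h$ be the chord of $h$ joining $(\tfrac{m}{M},h(\tfrac{m}{M}))$ and $(\tfrac{M}{m},h(\tfrac{M}{m}))$. Because $h$ is operator monotone, hence concave, the same chord controls $h$ on two sides of $[\tfrac{m}{M},\tfrac{M}{m}]$: concavity gives $h(s)\ge \ell_h(s)$, while the definition of $\alpha$ gives $h(s)\le \alpha\,\ell_h(s)$. Using the representation $A\sigma B=A^{1/2}h(A^{-1/2}BA^{-1/2})A^{1/2}$, the lower bound applied to $A^{-1/2}BA^{-1/2}$ and conjugated by $A^{1/2}$ yields $A\sigma B\ge \mu_h B+\nu_h A$, and applying the positive linear map $\Phi$ gives $D\ge \mu_h\Phi(B)+\nu_h\Phi(A)$. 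Symmetrically, the upper bound applied to $\Phi(A)^{-1/2}\Phi(B)\Phi(A)^{-1/2}$ and conjugated by $\Phi(A)^{1/2}$ yields $C\le \alpha\bigl(\mu_h\Phi(B)+\nu_h\Phi(A)\bigr)$. Chaining these two inequalities through the common linear combination $\mu_h\Phi(B)+\nu_h\Phi(A)$ gives $C\le \alpha D$.

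Next comes the Mond--Pe\v{c}ari\'c chaining. Since $g$ is concave on $[m,M]$ we have $g(t)\ge \mu_g t+\nu_g$ there, so by functional calculus $g(D)\ge \mu_g D+\nu_g I$. As $g$ is increasing we have $\mu_g\ge 0$, so from $D\ge \alpha^{-1}C$ (which is $C\le\alpha D$ divided by $\alpha>0$) I get $g(D)\ge \mu_g\alpha^{-1}C+\nu_g I$. Finally, the definition of $\gamma$ gives the scalar estimate $f(t)\le \gamma\,(\mu_g\alpha^{-1}t+\nu_g)$ for $m\le t\le M$, which lifts by functional calculus on $C$ to $f(C)\le \gamma(\mu_g\alpha^{-1}C+\nu_g I)$. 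Combining the last two displays, $f(C)\le \gamma(\mu_g\alpha^{-1}C+\nu_g I)\le \gamma\,g(D)$, which is precisely $f(\Phi(A)\sigma\Phi(B))\le \gamma\,g(\Phi(A\sigma B))$.

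The hard part will be the reverse Ando step: one must extract the constant $\alpha$ by controlling $h$ from above and below by the \emph{same} chord $\ell_h$, and must correctly track on which operator ($A,B$ before $\Phi$, versus $\Phi(A),\Phi(B)$ after) each bound is deployed so that the two estimates meet at $\mu_h\Phi(B)+\nu_h\Phi(A)$. A secondary point to check is that the denominators $\mu_h t+\nu_h$ and $\mu_g\alpha^{-1}t+\nu_g$ remain strictly positive on the relevant intervals, so that $\alpha$ and $\gamma$ are well defined as finite maxima and the ratio bounds are genuine; this, together with the spectral containments $m\le C,D\le M$, is where the unitality-type normalization enters.
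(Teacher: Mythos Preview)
Your proposal is correct and follows essentially the same route as the paper's proof: first establish the reverse Ando inequality $\Phi(A)\sigma\Phi(B)\le\alpha\,\Phi(A\sigma B)$, then use the concavity of $g$ to pass to the chord $\mu_g t+\nu_g$, substitute via $D\ge\alpha^{-1}C$, and absorb $f$ into the constant $\gamma$. The only difference is that where the paper simply invokes \cite[Corollary~5.29]{abc} for the reverse Ando step, you unpack that citation by explicitly sandwiching $h$ between its chord $\ell_h$ and $\alpha\,\ell_h$ on $[\tfrac{m}{M},\tfrac{M}{m}]$ and chaining through the common linear combination $\mu_h\Phi(B)+\nu_h\Phi(A)$; this is exactly how that corollary is proved, so your argument and the paper's are the same in substance.
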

\begin{proof}
It follows from $ \frac{m}{M}A \leq B\leq \frac{M}{m}A$ and \cite[Corollary 5.29]{abc} that
\begin{eqnarray}\label{mond2}
\alpha \Phi(A\sigma B) \geq \Phi(A)\sigma \Phi(B),
\end{eqnarray}
where $\alpha=\max\left\{\frac{h(t)}{\mu_h t+\nu_h}: \frac{m}{M}\leq t\leq \frac{M}{m}\right\}$, $\mu_h:=\frac{h(\frac{M}{m})-h(\frac{m}{M})}{(\frac{M}{m})-(\frac{m}{M})}$ and $\nu_h:=\frac{(\frac{M}{m})h(\frac{m}{M})-(\frac{m}{M})h(\frac{M}{m})}{(\frac{M}{m})-(\frac{m}{M})}$. Since $g$ is a concave function, $g(t)\geq \mu_g t+\nu_g$ for all $t\in[m,M]$, where  $$\mu_g:=\frac{g(M)-g(m)}{M-m}\qquad {\rm and} \qquad \nu_g:=\frac{Mg(m)-mg(M)}{M-m}.$$
Utilizing the continuous functional calculus and the fact that $m \leq \Phi(A\sigma B)\leq M$ we get $$g(\Phi(A\sigma B))\geq \mu_g\Phi(A\sigma B)+\nu_g.$$
It follows from \eqref{mond2} that
$$g(\Phi(A\sigma B))\geq \mu_g \alpha^{-1}\big(\Phi(A)\sigma \Phi(B)\big)+\nu_g.$$
We intend to find a scalar $\gamma$ such that $\gamma \mu_g \alpha^{-1}\big(\Phi(A)\sigma \Phi(B)\big)+\nu_g \geq f(\Phi(A)\sigma \Phi(B))$. By the functional calculus it is sufficient to find $\gamma$ in such a way that $\gamma (\mu_g\alpha^{-1}t+\nu_g) \geq f(t)$ for all $t\in [m,M]$. Thus $\gamma$ should be at least $$\max\left\{\frac{f(t)}{\mu_g\alpha^{-1}t+\nu_g}: m\leq t\leq M\right\},$$ which can be found by maximizing the one variable function $$\frac{f(t)}{\mu_g\alpha^{-1}t+\nu_g}$$ by usual calculus computations. One should note that there is no $t\geq m$ such that $\mu_g\alpha^{-1}t+\nu_g=0$.
\end{proof}
\begin{remark}
If $\sigma=\sharp_\epsilon$, then $\alpha$ in Theorem \ref{thm3} is indeed $k(\frac{m}{M}, \frac{M}{m})$ in which
\begin{align*}
k(t,s)=\frac{\epsilon^\epsilon (s-t) (st^\epsilon-ts^\epsilon)^{\epsilon-1}}{(1-\epsilon)^{\epsilon-1}(s^\epsilon-t^\epsilon)^{\epsilon}}
\end{align*}
is a Kantorovich constant.
\end{remark}

In the case when $f = g$ is a nonzero operator monotone function on $[0, \infty)$, we have the following theorem.

\begin{theorem}\label{thm4}
Let $\Phi$ be a positive linear map, $f$ a nonzero operator monotone function on $[0,\infty)$, $\tau, \sigma$ operator means between arithmetic and harmonic means, and $0 < m \le M$. Then for any positive matrices $0 < m \le A, B \le M$,
 \begin{eqnarray*}
f\left(\Phi(A)) \tau f(\Phi(B)\right) \leq K(M, m) f(\Phi(A \sigma B)).
\end{eqnarray*}
\end{theorem}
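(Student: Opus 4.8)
The plan is to sandwich the quantity $f(\Phi(A))\,\tau\, f(\Phi(B))$ between $f$ evaluated at the arithmetic mean and $K(M,m)$ times $f$ evaluated at $\Phi(A\sigma B)$, using two structural properties of a nonnegative operator monotone function $f$ on $[0,\infty)$: by L\"owner's theorem such an $f$ is operator concave, and since it is concave with $f(0)\ge 0$ it obeys the scalar homogeneity bound $f(ct)\le c\,f(t)$ for every $c\ge 1$ and $t\ge 0$. Throughout, $\nabla$ and $\,!\,$ denote the arithmetic and harmonic means, with representing functions $\tfrac{1+t}{2}$ and $\tfrac{2t}{1+t}$; recall that the hypothesis that $\tau,\sigma$ lie between the harmonic and arithmetic means means precisely $A\,!\,B\le A\tau B\le A\nabla B$ and the same for $\sigma$.

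First I would prove the upper estimate
$$ f(\Phi(A))\,\tau\, f(\Phi(B)) \le f\bigl(\Phi(A\nabla B)\bigr). $$
Since $\tau\le\nabla$, the left-hand side is at most $f(\Phi(A))\,\nabla\, f(\Phi(B))=\tfrac12\bigl(f(\Phi(A))+f(\Phi(B))\bigr)$. Operator concavity of $f$ then bounds this by $f\bigl(\tfrac12(\Phi(A)+\Phi(B))\bigr)$, and linearity of $\Phi$ rewrites $\tfrac12(\Phi(A)+\Phi(B))$ as $\Phi(A\nabla B)$. Only the bound $\tau\le\nabla$ is used here.

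The heart of the argument is the operator inequality
$$ A\nabla B \le K(M,m)\,(A\sigma B),\qquad 0<m\le A,B\le M. $$
I would first reduce to $\sigma=\,!\,$ via $A\sigma B\ge A\,!\,B$, and then prove $A\nabla B\le K(M,m)\,(A\,!\,B)$ by a congruence reduction. Writing $T=A^{-1/2}BA^{-1/2}$, the bounds $m\le A,B\le M$ force the spectrum of $T$ into $[m/M,\,M/m]$, while congruence by the invertible $A^{1/2}$ turns the desired inequality into the single-variable statement $\tfrac{1+t}{2}\le K(M,m)\,\tfrac{2t}{1+t}$, i.e. $\tfrac{(1+t)^2}{4t}\le K(M,m)$, on $[m/M,M/m]$. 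The maximum of $\tfrac{(1+t)^2}{4t}$ over this interval occurs at $t=M/m$ and equals exactly $\tfrac{(M+m)^2}{4Mm}=K(M,m)$, so the inequality holds and is sharp. This is where the Kantorovich constant is forced, and it is precisely the step that sidesteps the failure of the squared inequality noted earlier, because after congruence it is an inequality involving only the single operator $T$.

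Applying the positive map $\Phi$ gives $\Phi(A\nabla B)\le K(M,m)\,\Phi(A\sigma B)$; operator monotonicity of $f$ then yields $f(\Phi(A\nabla B))\le f\bigl(K(M,m)\,\Phi(A\sigma B)\bigr)$, and the homogeneity bound applied through the functional calculus (with $c=K(M,m)\ge 1$) gives $f\bigl(K(M,m)\,\Phi(A\sigma B)\bigr)\le K(M,m)\,f(\Phi(A\sigma B))$. Combining with the first estimate finishes the proof. I expect the main obstacle to be resisting the temptation to manipulate $A\nabla B$ and $A\sigma B$ directly, and instead performing the congruence reduction to a scalar estimate; the supporting lemmas (operator concavity and the homogeneity bound for nonnegative operator monotone functions) are standard but must be invoked in the right order so that the single constant $K(M,m)$ survives intact.
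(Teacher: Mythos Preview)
Your proof is correct and follows essentially the same route as the paper's: both chain together $\tau\le\nabla$, operator concavity of $f$, the Kantorovich-type inequality $\Phi(A)\nabla\Phi(B)\le K(M,m)\,\Phi(A\sigma B)$, operator monotonicity of $f$, and the sub-homogeneity bound $f(ct)\le c\,f(t)$ for $c\ge 1$. The only difference is that the paper quotes the Kantorovich inequality from \cite{HoaToan} (for unital $\Phi$) and then reduces to the non-unital case by a limit argument, whereas you prove $A\nabla B\le K(M,m)(A\,!\,B)$ directly via congruence and then apply $\Phi$; since $\Phi(A\nabla B)=\Phi(A)\nabla\Phi(B)$ by linearity, this yields the same inequality and has the small bonus of handling arbitrary positive $\Phi$ without any unital reduction.
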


\begin{proof}
It is well-known (see, for example \cite{HoaToan}) that if $\Phi$ is unital, then
\begin{equation}\label{hoa}
K(M, m) \Phi(A \sigma B) \ge \Phi(A) \nabla \Phi(B).
\end{equation}
There is a standard argument in dealing with operator inequalities in which we may consider positive linear maps instead of unital positive linear maps. In fact, by passing to $\Phi+\varepsilon\,{\rm id}$ we can assume that $\Phi(I) >0$ and then we can define $\Psi(A)=\Phi(I)^{-1/2}\Phi(A)\Phi(I)^{-1/2}$ as a unital positive linear map. Using a limit argument, we get \eqref{hoa}.

Since $f(x)$ is operator monotone on $[0,\infty)$, hence $x/f(x)$ is operator monotone $[0,\infty)$, too. For $k\ge 1$ and for any $x\ge 0$, we have
$$
\frac{kx}{f(kx)} \ge \frac{x}{f(x)}
$$
or,
$$
f(kx) \le kf(x).
$$
Consequently, for any positive number $x$,
$$
f(K(M, m) x) \le K(M, m) f(x).
$$
From inequality (\ref{hoa}), we get
\begin{equation*}
\begin{split}
K(M, m) f(\Phi(A \sigma B)) & \ge f(K(M, m) \Phi(A \sigma B)) \\
& \ge f(\Phi(A) \nabla \Phi(B)) \\
& \ge f(\Phi(A)) \nabla f(\Phi(B)) \\
& \ge f(\Phi(A)) \tau f(\Phi(B)).
\end{split}
\end{equation*}
\end{proof}

\section{Concluding remark}
From Theorem \ref{thm4} for any $p\in [0, 1]$ and for any positive matrices $0 < m \le A, B \le M,$
\begin{equation}\label{q2}
A^p \nabla B^p \le K(M, m) (A \sharp B)^p.
\end{equation}
At the same time, inequality (\ref{q2}) is failed when $p=2$. Indeed, it is easy to construct some matrices $A$ and $B$ that do not satisfy (\ref{q2}). For example, with $m=0.4, M=3$ and for matrices
\begin{equation*}\label{giatriXY}
A = \left(\begin{array}{cc}
1.3096 & 0.4414\\
0.4414 & 0.6204 \\
 \end{array}
\right),\quad
B = \left(\begin{array}{cc}
0.7062 & 1.1641\\
1.1641 & 2.1050 \\
\end{array}
\right),
\end{equation*}
we have
$$det(K(M, m) (A \sharp B)^2 - A^2 \nabla B^2)=-0.4111$$

It was shown in \cite{Ando-Hiai} that the inequalities
$$
f(A\sharp B) \le f(A\nabla B) \quad \hbox{and} \quad f(A\nabla B) \le f(A) \sharp f(B)
$$
characterize operator monotone (decreasing, respectively) functions on $[0, \infty)$. It was  also proved in \cite{hoaLAA} that an additive reverse inequality
$$
f(A\nabla B) \le f(A\sharp B+ \frac{1}{2}A^{1/2}|I-A^{-1/2}BA^{-1/2}|A^{1/2})
$$
characterizes operator monotonicity.

So, it is natural to ask the following question: {\em Does the multiplicative reverse inequality (\ref{hoa}) characterize operator monotone functions? In particular, suppose that
\begin{equation*}\label{hoa}
 f(A) \nabla f(B) \le K(M, m) f(A \sharp B)
\end{equation*}
holds for any positive matrices $0 < m \le A, B \le M$ (where $m < M$ are the given positive numbers). Is it true that the function $f$ is operator monotone on $[m, M]$?}
\bigskip

\textbf{Acknowledgements} M. S. Moslehian (the corresponding author) would like to thank The Abdus Salam International Centre for Theoretical Physics (ICTP) for the opportunity provided to develop the idea of paper during his stay in Trieste, Italy.

\bibliographystyle{amsplain}

\end{document}